\tikzset{nodc/.style={circle,draw=blue!50,fill=pink!80,inner sep=4.2pt}}
\tikzset{nod1/.style={circle,draw=black,fill=black,inner sep=1pt}}
\tikzset{nod2/.style={circle,draw=black,fill=black,inner sep=1.6pt}}
\tikzset{nod3/.style={circle,draw=black,inner sep=2pt}}
\tikzset{nodempty/.style={circle,draw=black,inner sep=2pt}}
\tikzset{nodde/.style={circle,draw=blue!50,fill=pink!80,inner sep=4.2pt}}
\tikzset{noddee/.style={circle,draw=black,fill=black,inner sep=2pt}}
\theoremstyle{plain}
\newtheorem{proposition}[equation]{Proposition}
\newtheorem{theorem}[equation]{Theorem}
\newtheorem{corollary}[equation]{Corollary}
\newtheorem{lemma}[equation]{Lemma}
\theoremstyle{definition}
\newtheorem{definition}[equation]{Definition}
\newtheorem{remark}[equation]{Remark}
\newcommand{\CE}{\mathcal{C}}
\newcommand{\A}{\mathcal{A}}
\newcommand{\kk}{\Bbbk}
\newcommand{\cc}{\mathfrak{c}}
\newcommand{\Ind}{\operatorname{Ind}}
\newcommand{\lk}{\operatorname{lk}}
\newcommand{\del}{\operatorname{del}}
\newcommand{\cd}{\operatorname{co-chord}}
\newcommand{\reg}{\operatorname{reg}}
\newcommand{\im}{\operatorname{im}}
\newcommand{\ve}{\operatorname{ve}}
\newcommand{\col}{\operatorname{col}}
\begin{document}

\bibliographystyle{plain}

\title{The $v$-number and Castelnuovo-Mumford regularity of graphs}

\author{Yusuf Civan}

\address{Department of Mathematics, Suleyman Demirel University,
Isparta, 32260, Turkey.}
\email{yusufcivan@sdu.edu.tr}

\keywords{}

\date{\today}


\subjclass[2010]{}

\begin{abstract}
We prove that for every integer $k\geq 1$, there exists a connected graph $H_k$ such that $v(H_k)=\reg(H_k)+k$, where $v(G)$ and $\reg(G)$ denote the $v$-number and the (Castelnuovo-Mumford) regularity of a graph $G$ respectively.
\end{abstract} 

\maketitle
\section{Introduction}
A \emph{clutter} $\CE$ on a vertex set $V=\{x_1,\ldots,x_n\}$ is a family of subsets (edges or circuits) of $V$ which are pairwise incomparable with respect to the inclusion.  We identify the set of edges with the clutter $\CE$ itself. When $R=\kk[x_1,\ldots,x_n]=\bigoplus_{t=0}^{\infty} R_t$ is a polynomial ring over a ﬁeld $\kk$ with the standard grading, the \emph{edge ideal} $I(\CE)$ of the clutter $\CE$ is defined to be the ideal of $R$ generated by all squarefree monomials $x_e:=\prod_{x_i\in e} x_i$
such that $e\in \CE$.

Consider the minimal free graded resolution of $R/I(\CE)$ as an $R$-module:
\begin{equation*}
0\to \bigoplus_j R(-j)^{\beta_{t,j}}\to \ldots \to \bigoplus_j R(-j)^{\beta_{1,j}}\to R\to R/I(\CE)\to 0.
\end{equation*}
The \emph{Castelnuovo-Mumford regularity} or simply the \emph{regularity} of $R/I(\CE)$ is defined as
\begin{equation*}
\reg_{\kk}(\CE):=\reg_{\kk}(R/I(\CE))=\max \{j-i\colon \beta_{i,j}\neq 0\}\footnote{Unless otherwise stated, our results are independent of the characteristic of the coefficient field. So, wherever it is appropriate, we drop $\kk$ from our notation.}.
\end{equation*}

Most of the recent work in the area has been focused on either finding applicable bounds on the (Castelnuovo-Mumford) regularity $\reg(\CE)$ in terms of other parameters or performing exact computation of the regularity in specific cases~\cite{BC-prime}. Recently, a new invariant associated to the edge ideal $I(\CE)$ is introduced in~\cite{CSTVV} to study the asymptotic behavior of the minimum distance of projective Reed–Muller-type codes that we recall next. If $\CE$ is a clutter, a subset $A\subseteq V$ is an \emph{independent set} if $e\nsubset A$ for every $e\in \CE$. Furthermore, a subset $W\subseteq V$ is said to be a \emph{vertex cover} provided that $V\setminus W$ is an independent set.

When $A$ is an independent set, a vertex $w\in V\setminus A$ is a \emph{neighbor} of $A$ if $A\cup \{w\}$ contains an edge of $\CE$. We denote by $N_{\CE}(A)$, the set of all neighbors of $A$ in $\CE$, and by $\A(\CE)$, the family of all independent sets $A$ such that $N_{\CE}(A)$ is a minimal vertex cover.

\begin{definition}\textnormal{~\cite{CSTVV,JV}}
The $v$-\emph{number} of a clutter $\CE$ is defined by 
$$v(\CE):=\min \{|A|\colon A\in \A(\CE)\}.$$
\end{definition}
Notice the change of notation. The $v$-number is originally defined for the ideal $I(\CE)$ in ~\cite{CSTVV,JV}, while there is no harm for associating it directly to the clutter $\CE$ itself.

The subject of two recent papers~\cite{JV,SS} is the comparison of the $v$-number and the regularity of graphs. Under suitable restrictions, it is proved that the $v$-number of a graph $G$ provides a lower bound to $\reg(G)$. On the other hand, Jaramillo and Villarreal~\cite{JV} show that there exists a graph $G$ satisfying $v(G)=\reg(G)+1$ (over the field of rationals), and ask whether or not the inequality $v(H)\leq \reg(H)+1$ holds for every graph $H$.

We prove that there exist even connected graphs such that their $v$-numbers are far larger than their regularities. 

\begin{theorem}\label{thm:main}
For every integer $k\geq 1$, there exists a connected graph $H_k$ such that 
$v(H_k)=\reg(H_k)+k$.
\end{theorem}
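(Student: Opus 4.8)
The plan is to first re-express the $v$-number in purely covering-theoretic terms and then to reduce the theorem to the existence of a single ``gap gadget,'' isolating connectivity as the genuine difficulty.

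\emph{Step 1: a combinatorial reformulation.} I would first prove that for a graph $G$ without isolated vertices one has $A\in \A(G)$ if and only if $A$ is independent and $N_G(A)$ is a vertex cover of $G$; that is, minimality of the cover $N_G(A)$ is automatic once $A$ is independent (if $w\in N_G(A)$ were removable, the edge joining $w$ to its neighbour $a\in A$ would be uncovered, since $a\notin N_G(A)$ by independence). This yields the working description
\begin{equation*}
v(G)=\min\{\,|A|\ :\ A\ \text{independent and}\ N_G(A)\ \text{is a vertex cover of}\ G\,\},
\end{equation*}
equivalently the least size of an independent set $A$ for which $V\setminus N_G(A)$ is again independent. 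This turns the lower bound $v\ge r+k$ into the assertion that every independent set whose neighbourhood covers all edges must be large, exactly the sort of claim one can attack by local, copy-by-copy counting.

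\emph{Step 2: reduction to one gadget via additivity.} Both invariants are additive over disjoint unions: $\reg(R/I(G_1\sqcup G_2))=\reg(R/I(G_1))+\reg(R/I(G_2))$ is standard, and the reformulation gives $v(G_1\sqcup G_2)=v(G_1)+v(G_2)$ at once, since $N(A)$ covers $G_1\sqcup G_2$ iff its restriction covers each $G_i$, forcing a valid configuration in every component. Starting from a graph $G_0$ with $v(G_0)=\reg(G_0)+1=:r+1$ (such a graph exists by Jaramillo--Villarreal), the disjoint union of $k$ copies already satisfies $v=k(r+1)$ and $\reg=kr$, i.e.\ the gap is exactly $k$. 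The only property it lacks is connectivity, so the entire remaining content of the theorem is to make it connected without disturbing either invariant.

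\emph{Step 3: a neutral connector.} I would glue the copies through a small connecting gadget (for instance a single new vertex, or a short path or clique, attached to one carefully chosen vertex of each copy) and verify neutrality of both invariants. For regularity I would run the deletion--link recursion $\reg(R/I(G))\le \max\{\reg(R/I(G\setminus x)),\reg(R/I(G\setminus N[x]))+1\}$, together with the induced-matching lower bound, to show the connector leaves $\reg=kr$ unchanged; for the $v$-number I would use Step~1 to argue that covering the edges internal to each copy still forces a full within-copy solution of size $r+1$ in every copy, after which the few connector edges are covered automatically, so that no independent set of size below $k(r+1)$ can have a covering neighbourhood. The resulting connected graph $H_k$ then satisfies $v(H_k)=k(r+1)=\reg(H_k)+k$.

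\emph{Main obstacle.} The crux is the neutrality in Step~3, and it is genuinely delicate because the two requirements pull in opposite directions: any attempt to connect the copies tends either to create a cheap independent set whose neighbourhood already covers everything (a universal vertex, say, collapses $v$ to $1$), destroying the lower bound on $v$, or to lengthen an induced matching and thereby raise $\reg$. Designing the connector so that the two recursions simultaneously certify ``no change'' is the heart of the argument. Two subordinate points are pinning $\reg(H_k)$ exactly (matching the induced-matching lower bound against the recursion's upper bound) and recording that, as in Jaramillo--Villarreal, $\reg$ may depend on $\operatorname{char}\kk$, so the identity is asserted over $\Q$.
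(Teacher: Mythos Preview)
Your Steps~1 and~2 are correct, and indeed the paper follows precisely this high-level strategy: build a gap-one gadget, replicate it, and connect the copies. The paper even proves your Step~1 reformulation (Theorem~\ref{thm:v-ive}) under the name \emph{independent vertex--edge domination}. So the skeleton is right.

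The genuine gap is Step~3. You correctly identify the connector as ``the heart of the argument'' and ``genuinely delicate,'' but you do not construct one; you list candidates (a vertex, a path, a clique) and say you \emph{would} verify neutrality, without doing so. That is exactly the content of the theorem. Two specific difficulties you have not addressed: first, for the lower bound $v\ge k(r+1)$ when the connecting vertex $c$ lies in $A$, the set $N(A)$ restricted to the $i$th copy equals $\{x_i\}\cup N_{G_0^{(i)}}(A_i)$, which is a \emph{weaker} covering condition than $A_i\in\A(G_0^{(i)})$; nothing you have said rules out $|A_i|<r+1$ in that case. Second, the deletion--link recursion only gives $\reg(H)\le kr+1$ in general (since $\reg(H-N[c])+1$ can reach $kr+1$), so pinning $\reg(H)=kr$ requires a further argument you have not supplied.

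It is instructive that the paper does \emph{not} look for a neutral connector at all. It takes its own characteristic-free gadget $D$ (a $17$-vertex flag triangulation of the dunce hat, with $v(D)=3$ and $\reg(D)=\im(D)=\cd(D)=2$), attaches a clique $K_n$ by a single pendant edge to each of $n$ copies, and \emph{accepts} that the clique contributes one extra edge to the induced matching, giving $\reg(G_n)=2n+1$ rather than $2n$. It then simply takes $n=k+1$ copies instead of $k$, so that $v(H_k)-\reg(H_k)=3(k+1)-(2(k+1)+1)=k$. This dodges your ``opposite pull'' problem entirely: the connector is allowed to move $\reg$, provided the move is controlled. Relying on the Jaramillo--Villarreal graph as a black box would also cost you characteristic independence, which the paper's dunce-hat gadget preserves.
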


\section{Preliminaries}
A \emph{simplicial complex} $X$ on a vertex set $V$ is simply a family of subsets of $V$, closed under inclusion such that $\{x\}\in X$ for every $x\in V$. A set $A\in X$ is said to be a \emph{face} (or a \emph{simplex}) of $X$, and the dimension of a face $A$ is $\dim(A)=|A|-1$. The \emph{dimension} $\dim(X)$ of a complex $X$ is the maximum dimension of a face in $X$. A face $F$ of $X$ is said to be a \emph{facet} of $X$ if it is a maximal face with respect to the inclusion.

For a given face $A\in X$, the \emph{deletion} and \emph{link} subcomplexes of $X$ at the face $A$ is defined by $\del(X;A):=\{S\in X\colon A\cap S=\emptyset\}$ and $\lk(X;A):=\{T\in X\colon A\cap T=\emptyset\;\textnormal{and}\;T\cup A\in X\}$. 

When $X$ is a simplicial complex, a subset $S\subseteq V$ is said to be a \emph{circuit} (\emph{minimal non-face}) of $X$ if $S$ is not a face of $X$ while any proper subset of $S$ is. We denote by $\cc(X)$, the family of all circuits of $X$. On the other hand, the family of all independent sets of a clutter $\CE$ forms a simplicial complex, the \emph{independence complex} $\Ind(\CE)$ of $\CE$. In fact, there is a one-to-one correspondence between simplicial complexes and clutters on a fixed set of vertices. If $X$ is a simplicial complex on $V$, then the set $\cc(X)$ of its circuits is a clutter, and $X=\Ind(\cc(X))$. On the other side, we have that $\cc(\Ind(\CE))=\CE$ for a clutter $\CE$ on $V$.

When $G=(V,E)$ is a (finite and simple) graph, we denote by $N_G(x):=\{y\in V\colon xy\in E\}$, the (open) neighborhood of $x$ in $G$, whereas $N_G[x]:=N_G(x)\cup \{x\}$ is its closed neighborhood. In particular,
we set $N_G(S):=\bigcup_{s\in S} N_G(s)$ for $S\subseteq V$. The size of the set $N_G(v)$ is called the \emph{degree} of $x$ in $G$ and denoted by $\deg_G(v)$. Furthermore, $\overline{G}$ denotes the complement of the graph $G$. For a given subset $S\subseteq V$, the subgraph $G[S]$ of $G$ induced by the set $S$ is the graph on $S$ with $E(G[S])=E\cap (S\times S)$. Finally, we denote by $K_n$, $P_n$ and $C_k$, the complete, path and cycle graphs on $n\geq 1$ and $k\geq 3$ vertices respectively.

We say that $G$ is $H$-free if no induced subgraph of $G$ is isomorphic to $H$. A graph $G$ is called \emph{chordal} if it is $C_r$-free for every $r>3$. Moreover, a graph $G$ is said to be \emph{co-chordal} if its complement $\overline{G}$ is a chordal graph.

The following provides an inductive bound on the regularity of graphs. 

\begin{lemma}\textnormal{\cite{DHS}}\label{lem:induction-sc}
Let $G$ be a graph and let $v\in V$ be given. Then
\begin{equation*}
\reg (G)\leq \max\{\reg(G-v), \reg(G-N_G[v])+1\}.
\end{equation*}
Moreover, $\reg(G)$ always equals to one of $\reg(G-v)$ or $\reg(G-N_G[v])+1$. 
\end{lemma}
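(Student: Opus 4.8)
The plan is to derive everything from a single short exact sequence attached to the distinguished variable $x_v$, treating the stated inequality and the ``moreover'' (equality) assertion separately; the equality is the part that requires a genuine idea beyond formal homological algebra, and I would extract it from the fact that $G-v$ and $G-N_G[v]$ sit inside $G$ as \emph{induced} subgraphs. First I would write down, in $R=\kk[V]$, the degree-preserving short exact sequence
\begin{equation*}
0\to \bigl(R/(I(G):x_v)\bigr)(-1)\xrightarrow{\;\cdot x_v\;} R/I(G)\to R/\bigl(I(G)+(x_v)\bigr)\to 0,
\end{equation*}
where the first map is multiplication by $x_v$ (injective onto the colon, of degree $1$, hence the shift) and the second is reduction modulo $x_v$.

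The next step is to identify the two outer terms combinatorially. Setting $x_v=0$ kills every edge through $v$, so $R/(I(G)+(x_v))$ is, up to the inert tensor factor $\kk[x_v]/(x_v)$, the edge ring of $G-v$, giving $\reg\bigl(R/(I(G)+(x_v))\bigr)=\reg(G-v)$. On the other hand one computes $(I(G):x_v)=I(G-N_G[v])+(x_w\colon w\in N_G(v))$; since the variables $x_w$ do not occur among the remaining generators (the edges of $G-N_G[v]$ lie in disjoint variables), quotienting by them leaves the regularity unchanged, so $\reg\bigl(R/(I(G):x_v)\bigr)=\reg(G-N_G[v])$ and the shift contributes the extra $+1$.

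For the inequality I would invoke the standard estimate that in any short exact sequence $0\to A\to B\to C\to 0$ of finitely generated graded modules one has $\reg(B)\leq\max\{\reg(A),\reg(C)\}$ (read off from the long exact sequence of $\mathrm{Tor}(-,\kk)$ via the Betti-number characterization of regularity). Applied to the sequence above, this is exactly $\reg(G)\leq\max\{\reg(G-N_G[v])+1,\reg(G-v)\}$.

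The ``moreover'' part is where I expect the real obstacle, and it is instructive to note why: the generic short-exact-sequence inequalities alone are \emph{not} enough, since in the boundary situation $\reg(G-v)=\reg(G-N_G[v])$ they permit $\reg(G)$ to fall strictly below both candidate values. The additional input I would use is the monotonicity of regularity under induced subgraphs, namely $\reg(H)\leq\reg(G)$ whenever $H$ is an induced subgraph of $G$. Writing $a:=\reg(G-N_G[v])$, $c:=\reg(G-v)$, and $b:=\reg(G)$, the nested chain of induced subgraphs $G-N_G[v]=(G-v)-N_G(v)\subseteq G-v\subseteq G$ forces $a\leq c\leq b$. Combining $c\leq b$ and $a\leq c$ with the upper bound $b\leq\max\{a+1,c\}$ then pins $b$ down by an elementary dichotomy: if $a+1\leq c$ the bound reads $b\leq c$, so $b=c$; while if $c\leq a+1$, the inequality $a\leq c$ squeezes $c$ into $\{a,a+1\}$, and then $c\leq b\leq a+1$ leaves only $b=c$ or $b=a+1$. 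In either case $\reg(G)$ equals $\reg(G-v)$ or $\reg(G-N_G[v])+1$, as required. Thus the inequality is routine, whereas the equality genuinely rests on the induced-subgraph ordering rather than on formal properties of the short exact sequence.
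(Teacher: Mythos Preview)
The paper does not supply a proof of this lemma; it is quoted from \cite{DHS} and used as a black box, so there is nothing in the paper to compare your argument against.

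That said, your argument is correct and is essentially the standard one. The short exact sequence
\[
0\to \bigl(R/(I(G):x_v)\bigr)(-1)\xrightarrow{\cdot x_v} R/I(G)\to R/\bigl(I(G)+(x_v)\bigr)\to 0
\]
together with the combinatorial identifications of the outer terms is exactly how the inequality is derived in \cite{DHS}. Your treatment of the ``moreover'' clause via the chain of induced subgraphs $G-N_G[v]\subseteq G-v\subseteq G$ and the resulting squeeze $a\le c\le b\le\max\{a+1,c\}$ is also the standard device; the case split is clean and the conclusion is valid. One minor remark: the monotonicity $\reg(H)\le\reg(G)$ for induced subgraphs $H$ is itself a fact that deserves a one-line justification (restriction of scalars, or Hochster's formula), but it is entirely routine and your use of it is appropriate.
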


A matching in a graph is a subset of edges no two of which share a vertex. An induced matching is a matching $M$ if no two vertices belonging to different edges of $M$ are adjacent. The maximum size of an induced matching of $G$ is known as the \emph{induced matching number} $\im(G)$ of $G$. The induced matching number provides a lower bound to regularity, that is, $\im(G)\leq \reg(G)$ holds for every graph $G$~\citep{MK}. Finally, if we denote by
$\cd(G)$, the least number of co-chordal subgraphs $G_1,\ldots,G_k$ of $G$ satisfying $E(G)=\bigcup_{i=1}^k E(G_i)$, then the inequality $\reg(G)\leq \cd(G)$ holds for every graph $G$~\citep{RW2}.
\section{The $v$-number of simplicial complexes and graphs}
Using the above stated correspondence between simplicial complexes and clutters, we set $v(X):=v(\cc(X))$ for every simplicial complex $X$. We prove that the $v$-number of simplicial complexes is closely related to a known parameter appearing in the collapsibility theory of Wegner~\cite{Weg}. In particular, we verify that the $v$-number of the independence complex of a graph corresponds a domination parameter on the underlying graph.

We next rephrase the $v$-number in the language of simplicial complexes as follows. Let $A\in X$ be a face, and define $U_X(A):=\{v\in V-A\colon A\in \lk(X;v)\}\cup A$. Observe that if $F\in X$ is a facet, then $U_X(F)=F$.

\begin{proposition}\label{prop:v-n-sc}
$v(X)=\min \{|A|\colon A\in X\;\textnormal{and}\;U_X(A)\;\textnormal{is a facet of}\;X\}$ for every simplicial complex $X$.
\end{proposition}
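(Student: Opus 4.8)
The plan is to reduce everything to a single dictionary identity that converts the ``neighbor'' data of the clutter $\cc(X)$ into the link/facet data of $X$, namely
\[
U_X(A)=V\setminus N_{\cc(X)}(A)\quad\text{for every face }A\in X.
\]
First I would record that the faces of $X$ are exactly the independent sets of $\cc(X)$, since $X=\Ind(\cc(X))$. Then, fixing $A\in X$ and a vertex $w\in V-A$, I would argue: by definition $w\in N_{\cc(X)}(A)$ means $A\cup\{w\}$ contains an edge of $\cc(X)$, i.e.\ a minimal non-face of $X$. Because $A$ is already a face, any such minimal non-face must involve $w$, so $w\in N_{\cc(X)}(A)$ if and only if $A\cup\{w\}\notin X$. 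On the other hand $w\in U_X(A)$ (with $w\notin A$) means precisely $A\in\lk(X;w)$, i.e.\ $A\cup\{w\}\in X$. Hence the vertices of $V-A$ partition according to whether the one-vertex extension $A\cup\{w\}$ is or is not a face, which gives the disjoint decomposition $V=U_X(A)\sqcup N_{\cc(X)}(A)$ and the displayed identity.

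The second step is to match ``minimal vertex cover'' with ``facet.'' A set $W\subseteq V$ is a vertex cover of $\cc(X)$ iff $V\setminus W$ is independent, i.e.\ a face of $X$; and removing a vertex $w\in W$ still yields a cover iff $(V\setminus W)\cup\{w\}$ remains a face. Therefore $W$ is a minimal vertex cover exactly when $V\setminus W$ is a face admitting no one-vertex enlargement inside $X$, i.e.\ a maximal face, i.e.\ a facet. Applying this with $W=N_{\cc(X)}(A)$ and invoking the identity above, I conclude that $A\in\A(\cc(X))$---meaning $A$ is independent and $N_{\cc(X)}(A)$ is a minimal vertex cover---if and only if $A\in X$ and $U_X(A)$ is a facet of $X$.

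The proposition then drops out by minimizing $|A|$ over the two equivalent descriptions of $\A(\cc(X))$:
\[
v(X)=v(\cc(X))=\min\{|A|\colon A\in\A(\cc(X))\}=\min\{|A|\colon A\in X\ \text{and}\ U_X(A)\ \text{is a facet of}\ X\}.
\]

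I do not expect a genuine obstacle: all the content sits in the identity $U_X(A)=V\setminus N_{\cc(X)}(A)$, which simply repackages the neighbor construction as the complement of the set of legal one-vertex extensions of $A$. The only places needing care are the two equivalences---that ``contains a minimal non-face'' is the same as ``fails to be a face'' (valid precisely because $A$ is itself a face), and that a minimal vertex cover corresponds to a maximal face. Once these are stated cleanly the identification is automatic; note also that the clause $A\in X$ in the final formula is in fact redundant, since $A\subseteq U_X(A)$ forces $A$ to be a face whenever $U_X(A)$ is a facet.
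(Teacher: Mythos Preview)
Your proof is correct and follows essentially the same route as the paper: establish the identity $U_X(A)=V\setminus N_{\cc(X)}(A)$ via the equivalence ``$w$ is a neighbor of $A$'' $\Leftrightarrow$ $A\notin\lk(X;w)$, then use that minimal vertex covers of $\cc(X)$ are exactly the complements of facets of $X$. The paper compresses these steps into three sentences, while you spell out the intermediate equivalences more carefully (and add the pleasant remark that the clause $A\in X$ is redundant), but the argument is the same.
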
 
\begin{proof}
Since $X=\Ind(\cc(X))$, a vertex $v\in V-A$ is a neighbor of a face $A\in X$ in $\cc(X)$ if and only if $A\notin \lk(X;v)$. Moreover, $N_{\cc(X)}(A)$ is a minimal vertex cover in $\cc(X)$ if and only if $V-N_{\cc(X)}(A)$ is a facet of $X$. However, the set $V-N_{\cc(X)}(A)$ is clearly equal to $U_X(A)$.
\end{proof}

A face $A$ of a simplicial complex $X$ is called a \emph{free face} if there exists a unique facet containing it. We define 
$$\lambda(X):=\min\{|A|\colon A\;\textrm{is a free face of}\;X\}\footnote{This is denoted by $\gamma_0(X)$ in~\citep[Section 4]{MT}.}.$$

\begin{corollary}\label{cor:v-free}
$v(X)=\lambda(X)$ for every simplicial complex $X$.
\end{corollary}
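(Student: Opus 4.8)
The plan is to read off the result directly from Proposition~\ref{prop:v-n-sc}, which already presents $v(X)$ as $\min\{|A|\colon A\in X\;\textnormal{and}\;U_X(A)\;\textnormal{is a facet of}\;X\}$. Since $\lambda(X)$ is the same kind of minimum taken over free faces, it suffices to establish, for every face $A\in X$, the equivalence
\begin{equation*}
U_X(A)\;\textnormal{is a facet of}\;X \quad\Longleftrightarrow\quad A\;\textnormal{is a free face of}\;X.
\end{equation*}
Once this holds, the two families of faces indexing the two minima coincide, and hence the minima agree. The first step I would carry out is to unwind $U_X(A)$: unravelling $A\in\lk(X;v)$ as the pair of conditions $v\notin A$ and $A\cup\{v\}\in X$, one obtains $U_X(A)=A\cup\{v\in V-A\colon A\cup\{v\}\in X\}$, that is, $U_X(A)$ is $A$ together with every vertex that can be adjoined to $A$ while staying inside $X$.

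For the implication $(\Leftarrow)$, suppose $A$ is a free face and let $F$ be the unique facet containing it; I would prove $U_X(A)=F$ by double inclusion. For $U_X(A)\subseteq F$: any $v\in U_X(A)\setminus A$ satisfies $A\cup\{v\}\in X$, so $A\cup\{v\}$ extends to a facet containing $A$, which by uniqueness is $F$, forcing $v\in F$. For $F\subseteq U_X(A)$: any $v\in F$ satisfies $A\cup\{v\}\subseteq F\in X$, so $A\cup\{v\}\in X$ by closure under inclusion, giving $v\in U_X(A)$. Thus $U_X(A)=F$ is a facet.

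For the converse $(\Rightarrow)$, assume $U_X(A)$ is a facet and let $F$ be an arbitrary facet with $A\subseteq F$. Each $v\in F$ satisfies $A\cup\{v\}\subseteq F\in X$, hence $v\in U_X(A)$, so $F\subseteq U_X(A)$; since $F$ is maximal and $U_X(A)\in X$, this containment is an equality. Therefore $A$ lies in a unique facet, i.e.\ it is a free face. Combining the two directions with Proposition~\ref{prop:v-n-sc} then gives $v(X)=\lambda(X)$.

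The point that I expect to require the most care is the subtlety that $U_X(A)$ need not be a face of $X$ at all for a general $A$: two vertices may each extend $A$ without being jointly compatible. The genuine content of the equivalence is that requiring $U_X(A)$ to be a \emph{facet} (indeed, as the maximality argument in $(\Rightarrow)$ shows, merely a face) already encodes the freeness of $A$, since the inclusion $F\subseteq U_X(A)$ together with maximality of $F$ is exactly what upgrades "$A$ extends into $U_X(A)$" into uniqueness of the facet above $A$.
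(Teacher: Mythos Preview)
Your proof is correct and follows essentially the same approach as the paper's: both directions---that a free face $B$ with unique facet $F$ satisfies $U_X(B)=F$, and that $U_X(A)$ being a facet forces it to be the unique facet above $A$---are exactly what the paper records, only you have written out the double-inclusion and maximality arguments that the paper leaves implicit.
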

\begin{proof}
Following Proposition~\ref{prop:v-n-sc}, if $v(X)=|A|$, then $U_X(A)$ is the unique facet of $X$ containing $A$; hence, $A$ is a free face of $X$. On the other hand, if $B$ is a free face of $X$ and $F$ is the unique facet containing it, then $U_X(B)=F$.
\end{proof}

For a given simplicial complex $X$, we define $\beta(X)$ to be the maximum size of a face $A$ such that it is minimal with the property that $\lk(X;A)$ is a simplex.\footnote{The number $\beta(X)$ for a simplicial complex $X$ is firstly considered in~\citep[Theorem $5.4$]{HW}.}

\begin{proposition}
$v(X)\leq \beta(X)$ for every simplicial complex $X$.
\end{proposition}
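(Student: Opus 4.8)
The plan is to reduce the claimed inequality to a statement purely about free faces, exploiting Corollary~\ref{cor:v-free}, which already gives $v(X)=\lambda(X)$. Thus it suffices to prove $\lambda(X)\le \beta(X)$, i.e. that the minimum size of a free face is at most the maximum size of an inclusion-minimal face whose link is a simplex. The bridge between the two sides is the elementary observation that a face $A$ is free precisely when $\lk(X;A)$ is a simplex; once this is in hand, both $\lambda$ and $\beta$ are computed from the same family of faces (those with simplicial link), and the inequality becomes a comparison between the smallest member of that family and the largest inclusion-minimal member.

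First I would make the characterization of free faces precise. Given a face $A\in X$, I claim that the assignment $F\mapsto F\setminus A$ is a bijection from the set of facets of $X$ containing $A$ onto the set of facets of $\lk(X;A)$. Indeed, if $F$ is a facet of $X$ with $A\subseteq F$, then $F\setminus A\in \lk(X;A)$, and any $T\in \lk(X;A)$ with $F\setminus A\subseteq T$ satisfies $T\cup A\in X$ and $T\cup A\supseteq F$, forcing $T\cup A=F$ by maximality of $F$, so $F\setminus A$ is a facet of the link; conversely every facet of $\lk(X;A)$ arises this way by completing $B\cup A$ to a facet of $X$. Since a finite simplicial complex is a simplex exactly when it possesses a unique facet (every face lying in some facet, a unique facet forces the complex to be the full power set of that facet), this bijection shows that $A$ has a unique facet above it — i.e. $A$ is free — if and only if $\lk(X;A)$ is a simplex.

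With this equivalence, both invariants refer to the same family $\FE:=\{A\in X\colon \lk(X;A)\ \text{is a simplex}\}$: Corollary~\ref{cor:v-free} gives $v(X)=\lambda(X)=\min\{|A|\colon A\in\FE\}$, while $\beta(X)=\max\{|A|\colon A\in\FE\ \text{is inclusion-minimal in}\ \FE\}$. To finish, I would pick a face $A_0\in\FE$ realizing the minimum, so $|A_0|=v(X)$. If some proper subset $A'\subsetneq A_0$ lay in $\FE$, then $|A'|<|A_0|$ would contradict the minimality of $|A_0|$; hence $A_0$ is inclusion-minimal in $\FE$, and therefore $|A_0|\le \beta(X)$. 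Combining, $v(X)=|A_0|\le\beta(X)$.

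I expect the only real obstacle to be the facet correspondence underlying the free-face characterization — verifying that $F\mapsto F\setminus A$ is well defined and surjective onto the facets of $\lk(X;A)$, and that ``unique facet'' coincides with ``simplex'' — since the concluding minimality step is immediate once $\lambda(X)$ and $\beta(X)$ are expressed through the common family $\FE$.
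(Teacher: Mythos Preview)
Your argument is correct and hinges on the same observation the paper uses: a face $A$ is free if and only if $\lk(X;A)$ is a simplex. The paper's proof runs in the opposite direction and is slightly more direct: it takes a face $A$ realizing $\beta(X)$ and shows, via Proposition~\ref{prop:v-n-sc}, that $U_X(A)$ is a facet of $X$, which immediately gives $v(X)\le|A|=\beta(X)$; in particular it only needs the implication ``simplex link $\Rightarrow$ free'' and does not pass through Corollary~\ref{cor:v-free}. Your version instead starts from a $\lambda$-minimizer and argues it must be inclusion-minimal in the family $\FE$, which is a touch longer but has the virtue of establishing the full equivalence $\FE=\{\text{free faces}\}$ and thereby also proving Corollary~\ref{cor:v-beta} along the way.
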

\begin{proof}
Assume that $\beta(X)=|A|$ for some face $A\in X$. Since $\lk(X;A)$ is a simplex, it follows that $\{v\in V-A\colon A\in \lk(X;v)\}\in \lk(X;A)$, that is, $U_X(A)\in X$. If $U_X(A)$ is not a facet, then it is contained in a facet $F$ of $X$. Let $u\in F\setminus U_X(A)$ be a vertex. It then follows that $u\notin A$ and $A\in \lk(X;u)$ so that $u\in U_X(A)$, a contradiction.
\end{proof}

We say that a free face is \emph{minimal} if none of its proper subsets is a free face in $X$.
\begin{corollary}\label{cor:v-beta}
$\beta(X)$ equals to the maximum size of a minimal free face in $X$.
\end{corollary}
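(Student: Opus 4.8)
The plan is to reduce the statement to a single structural equivalence: for any face $A\in X$, the link $\lk(X;A)$ is a simplex if and only if $A$ is a free face of $X$. Once this is established, the corollary follows by unwinding the two minimality conditions, since the faces counted by $\beta(X)$ and the minimal free faces will literally coincide.

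First I would prove the equivalence. Recall that a simplicial complex is a simplex precisely when it is the full power set $2^S$ of some set $S$, equivalently when it has a unique facet. So I would show that the facets of $\lk(X;A)$ are exactly the sets $F\setminus A$, where $F$ ranges over the facets of $X$ containing $A$. Indeed, if $T$ is a facet of $\lk(X;A)$, then $T\cup A\in X$ and $T\cup A$ is maximal among faces of $X$ containing $A$; because $X$ is closed under inclusion, such a maximal face is in fact a facet of $X$. Conversely, if $F$ is a facet of $X$ with $A\subseteq F$, then $F\setminus A$ is a facet of $\lk(X;A)$. This yields a bijection $F\mapsto F\setminus A$ between the facets of $X$ containing $A$ and the facets of $\lk(X;A)$. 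Consequently $\lk(X;A)$ has a unique facet---that is, it is a simplex---if and only if $X$ has a unique facet containing $A$, which is exactly the definition of $A$ being a free face.

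With the equivalence in hand, I would translate the defining property of $\beta(X)$. By definition $\beta(X)$ is the maximum size of a face $A$ that is minimal with respect to the property that $\lk(X;A)$ is a simplex. Replacing ``$\lk(X;A)$ is a simplex'' by ``$A$ is a free face'' throughout, a face $A$ is minimal with this property exactly when $A$ is a free face while no proper subset $B\subsetneq A$ is a free face; that is, exactly when $A$ is a minimal free face in the sense defined above. Hence the faces over which the maximum defining $\beta(X)$ is taken are precisely the minimal free faces of $X$, and the two quantities agree.

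I do not expect a serious obstacle here; the entire content sits in the facet correspondence for links. The only points deserving care are the justification that a maximal face of $X$ containing $A$ is automatically a facet of $X$ (so that maximality within the link matches maximality in $X$), together with the observation that ``simplex'' is synonymous with ``having a unique facet''; both are immediate from closure under inclusion.
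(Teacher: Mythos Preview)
Your argument is correct, and the key equivalence you isolate---$\lk(X;A)$ is a simplex if and only if $A$ is a free face---is exactly the content the paper intends. The paper gives no explicit proof of this corollary; it is meant to fall out of the preceding results, where the proof of Corollary~\ref{cor:v-free} shows that $U_X(A)$ is a facet precisely when $A$ is free, and the proof of the proposition immediately before Corollary~\ref{cor:v-beta} shows that $\lk(X;A)$ being a simplex forces $U_X(A)$ to be a facet. Your direct bijection between facets of $\lk(X;A)$ and facets of $X$ containing $A$ bypasses the $U_X$ machinery and also supplies the reverse implication cleanly, so it is a slightly more self-contained route to the same endpoint.
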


\begin{remark}\label{rem:russ}
The bound $v(X)\leq \beta(X)$ could be strict even for the independence complexes of graphs. For instance, we have that $v(P_4)=1<2=\beta(P_4)$. Moreover, we note that the beta-number and the regularity of simplicial complexes are incomparable in general. For the graph $P_4$, we have $\reg(P_4)=1<2=\beta(P_4)$. On the other hand, denote by $G$, the graph\footnote{The construction of the graph $G$ is due to R.~Woodroofe, and it was devised over a discussion with the author.} depicted in Figure~\ref{fig:russ} and set $H:=\overline{G}$. It can be easily checked that $v(H)=\beta(H)=1<2=\reg(H)$.
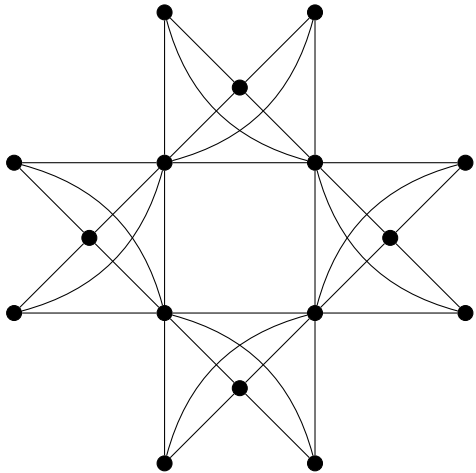
\begin{figure}[ht]
\begin{center}
\begin{tikzpicture}[scale=1]

\node [noddee] at (0,2) (v1) [] {};
\node [noddee] at (1,3) (v2) [] {}
	edge [] (v1);
\node [noddee] at (0,4) (v3) [] {}
     edge [] (v2);

\node [noddee] at (2,2) (v4) [] {}     
        edge [] (v1)
        edge [] (v2)
        edge [bend right] (v3);
\node [noddee] at (2,4) (v5) [] {}        
      edge [] (v4)
      edge [] (v3)
      edge [] (v2)
      edge [bend left] (v1);
\node [noddee] at (2,0) (v6) [] {}         
      edge [] (v4);
      
\node [noddee] at (3,1) (v7) [] {}         
      edge [] (v6)
      edge [] (v4);      
\node [noddee] at (4,0) (v8) [] {}         
      edge [] (v7)
      edge [bend right] (v4);
\node [noddee] at (4,2) (v9) [] {}         
      edge [] (v4)
      edge [] (v7)
      edge [bend right] (v6)
      edge [] (v8);
\node [noddee] at (2,6) (v10) [] {}         
      edge [] (v5);
\node [noddee] at (3,5) (v11) [] {}         
      edge [] (v5)
      edge [] (v10);      
\node [noddee] at (4,6) (v12) [] {}         
      edge [bend left] (v5)
      edge [] (v11);
\node [noddee] at (4,4) (v13) [] {}         
      edge [] (v5)
      edge [] (v9)
      edge [] (v11)
      edge [bend left] (v10)
      edge [] (v12); 
      
\node [noddee] at (5,3) (v14) [] {}         
      edge [] (v9)
      edge [] (v13);      
\node [noddee] at (6,4) (v15) [] {}         
      edge [] (v13)
      edge [] (v14)
      edge [bend right] (v9);           
\node [noddee] at (6,2) (v16) [] {}         
      edge [] (v9)
      edge [] (v14)
      edge [bend left] (v13);
\end{tikzpicture}
\end{center}
\caption{The graph $G$ in Remark~\ref{rem:russ}.}
\label{fig:russ}
\end{figure}
\end{remark}
\subsection{The $v$-number of graphs}

Let $G=(V,E)$ be a graph. A vertex $u\in V$ is said to \emph{vertex-wise dominate} an edge $e=xy\in E$, if $u\in N_G[x]\cup N_G[y]$. A subset $S\subseteq V$ is called a \emph{vertex-wise dominating set} (shortly $\ve$-dominating set), if for any edge $e\in E$, there exists a vertex $s\in S$ that vertex-wise dominates $e$ (see~\citep{JL} for details). Furthermore, a $\ve$-dominating set $S$ of $G$ is called \emph{minimal}, if no proper subset of $S$ is $\ve$-dominating for $G$. When $S$ is a minimal $\ve$-dominating set for $G$, every vertex in $S$ has a private neighbor in $E$. In other words, $e$ is a \emph{vertex-wise private neighbor} of $s\in S$ if $s$ $\ve$-dominates $e$ while no vertex in $S\setminus \{s\}$ $\ve$-dominates the edge $e$ in $G$.

The \emph{independent vertex-wise domination number} and the \emph{upper independent vertex-wise domination number} of $G$ are defined by
\begin{align*}
&i_{\ve}(G):=\min \{|S|\colon S\;\textrm{is\;an\;independent\;vertex-wise\;dominating\;set\;of\;}G\},\\
&\beta_{\ve}(G):=\max\{|T|\colon T\;\textrm{is\;a\;minimal\;independent\;vertex-wise\;dominating\;set\;of\;}G\}
\end{align*}
respectively. Notice that the inequality $i_{\ve}(G)\leq \beta_{\ve}(G)$ holds for every graph $G$.

\begin{theorem}\label{thm:v-ive}
$v(G)=i_{\ve}(G)$ for every graph $G$.
\end{theorem}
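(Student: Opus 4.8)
The plan is to prove the identity $v(G)=i_{\ve}(G)$ by translating both sides into the language of the independence complex $X=\Ind(G)$ and exploiting Corollary~\ref{cor:v-free}, which identifies $v(X)$ with $\lambda(X)$, the minimum size of a free face. Since $v(G)=v(\Ind(G))=\lambda(\Ind(G))$ by definition, it suffices to show that $i_{\ve}(G)=\lambda(\Ind(G))$. First I would unwind what a free face of $\Ind(G)$ is in graph-theoretic terms: a face of $\Ind(G)$ is an independent set $A$ of $G$, and the facets of $\Ind(G)$ are exactly the maximal independent sets, equivalently the complements of minimal vertex covers. So $A$ is a free face precisely when there is a unique maximal independent set containing $A$, equivalently a unique minimal vertex cover disjoint from $A$.

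The heart of the argument is a dictionary between free faces and minimal independent $\ve$-dominating sets. Given an independent set $S$, I would show that $S$ is a $\ve$-dominating set of $G$ if and only if $S$ meets $N_G[x]\cup N_G[y]$ for every edge $xy$, which is equivalent to saying that no edge of $G$ survives in $G-N_G[S]$; reformulated, $V\setminus N_G[S]$ is an independent set and in fact the \emph{unique} maximal independent set containing $S$ would correspond to the free-face condition. The key observation to nail down is the following equivalence: for an independent set $S$, the face $S$ has a unique facet of $\Ind(G)$ above it if and only if $S$ is an independent $\ve$-dominating set of $G$. Concretely, $U_X(S)=S\cup\{v\in V\setminus S:S\cup\{v\}\in\Ind(G)\}$ is precisely $S$ together with the vertices nonadjacent to all of $S$; this set is a facet exactly when it is itself independent and maximal, which forces every edge to be vertex-wise dominated by $S$. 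I would verify both directions of this equivalence carefully, checking that the $\ve$-domination condition is exactly the combinatorial constraint guaranteeing that $U_X(S)$ is a maximal independent set rather than merely an independent set.

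Once the correspondence ``$S$ is an independent $\ve$-dominating set $\iff$ $S$ is a free face of $\Ind(G)$'' is established, the theorem follows by taking minima on both sides: $i_{\ve}(G)=\min\{|S|:S\text{ is an independent }\ve\text{-dominating set}\}=\min\{|A|:A\text{ is a free face of }\Ind(G)\}=\lambda(\Ind(G))=v(G)$. The main obstacle I anticipate is pinning down the exact interplay between the \emph{minimality} hypotheses and the \emph{uniqueness} of the containing facet. In particular, one must be careful that a free face need not itself be a minimal $\ve$-dominating set, yet the minimum-size witness for $v$ will automatically be minimal; conversely the $\ve$-domination condition must be shown to be equivalent to ``$U_X(S)$ is a facet'' without assuming minimality. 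I would therefore isolate the clean equivalence at the level of the defining condition (being $\ve$-dominating $\leftrightarrow$ $U_X(S)$ is a facet) and only afterward pass to minima, so that the minimality bookkeeping is handled uniformly by the $\min$ in both definitions. I expect that checking $U_X(S)$ is maximal is where the vertex-wise domination definition is used most essentially, since an edge $xy$ failing to be dominated would produce a strictly larger independent set and destroy freeness.
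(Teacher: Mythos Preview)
Your approach is correct and amounts to the paper's argument filtered through Corollary~\ref{cor:v-free}: the paper works directly from the definition of $v(G)$ via $\A(G)$, showing that an independent set $S$ has $N_G(S)$ a minimal vertex cover if and only if $S$ is $\ve$-dominating, which is precisely your equivalence ``$U_X(S)$ is a facet $\iff$ $S$ is $\ve$-dominating'' stated in complementary language. One small clarification for when you write it out: the $\ve$-domination condition is what forces $U_X(S)=V\setminus N_G(S)$ to be \emph{independent} (an undominated edge $xy$ would place both endpoints in $U_X(S)$), whereas maximality of $U_X(S)$ is automatic once it is independent, since any vertex outside it lies in $N_G(S)$ and is therefore adjacent to $S$.
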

\begin{proof}
Suppose that $i_{\ve}(G)=|S|$, where $S$ is an independent $\ve$-dominating set. Observe that $N_G(S)$ is a vertex cover. Indeed, assume otherwise that there exists an edge $e=xy$ in $G-N_G(S)$. Since $S$ is a $\ve$-dominating set, there exists $s\in S$ such that either $sx\in E$ or $sy\in E$. However, this implies that either $x\in N_G(S)$ or else $y\in N_G(S)$, a contradiction.
The fact that $N_G(S)$ is a minimal vertex cover follows, since $S$ is a independent $\ve$-dominating set in $G$. This shows that $v(G)\leq i_{\ve}(G)$.

Next, assume that $v(G)=|A|$ for some independent set $A$ in $G$. If $f=uv\in E$, we must have that either $u\in N_G(A)$ or else $v\in N_G(A)$, since $N_G(A)$ is a vertex cover. However, this means that $A$ is an independent $\ve$-dominating set in $G$. Therefore, we conclude that $i_{\ve}(G)\leq |A|=v(G)$. 
\end{proof}

\begin{corollary}
$\beta_{\ve}(G)=\beta(\Ind(G))$ for every graph $G$.
\end{corollary}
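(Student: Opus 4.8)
The plan is to identify the two families of vertex subsets being counted and show they coincide; once that is done, the two maximum-cardinality quantities agree automatically. Concretely, I would prove that for a graph $G$ the free faces of $\Ind(G)$ are \emph{exactly} the independent $\ve$-dominating sets of $G$. Granting this, a subset is a minimal free face of $\Ind(G)$ if and only if it is a minimal independent $\ve$-dominating set of $G$: minimality is with respect to inclusion in both cases, and every subset of an independent set is again independent, so no hidden condition is lost when passing to proper subsets. Then Corollary~\ref{cor:v-beta} identifies $\beta(\Ind(G))$ with the maximum size of a minimal free face, while by definition $\beta_{\ve}(G)$ is the maximum size of a minimal independent $\ve$-dominating set; since the two maxima run over the same family, $\beta(\Ind(G))=\beta_{\ve}(G)$ follows at once.

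To establish the identification of the two families I would splice together facts already available. First, the computation inside the proof of Theorem~\ref{thm:v-ive} shows that, for an independent set $A$, the set $A$ is a $\ve$-dominating set of $G$ precisely when $N_G(A)$ is a vertex cover. Second, because $\cc(\Ind(G))$ is the edge clutter of $G$ and $A$ is independent, every edge contained in $A\cup\{v\}$ must meet $v$; hence $N_{\cc(\Ind(G))}(A)=N_G(A)$, and the remark following Proposition~\ref{prop:v-n-sc} gives $U_{\Ind(G)}(A)=V-N_G(A)$. Finally, by the reasoning used in the proof of Corollary~\ref{cor:v-free}, $A$ is a free face of $\Ind(G)$ exactly when $U_{\Ind(G)}(A)$ is a facet, which by Proposition~\ref{prop:v-n-sc} happens exactly when $N_G(A)$ is a \emph{minimal} vertex cover. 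This reduces the whole statement to the single equivalence ``$N_G(A)$ is a vertex cover $\iff$ $N_G(A)$ is a minimal vertex cover'' for neighborhoods of independent sets.

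That last equivalence is the step I expect to be the crux, since a priori a $\ve$-dominating set only yields a vertex cover, not a minimal one. The point is a self-improving feature of the operator $U_X$: if $V-N_G(A)$ is an independent set at all, then it is automatically a \emph{maximal} one. Indeed, any vertex $w\notin V-N_G(A)$ lies in $N_G(A)$ and is therefore adjacent to some $a\in A\subseteq V-N_G(A)$, so $\{w\}\cup(V-N_G(A))$ already contains the edge $wa$ and cannot be independent; thus $V-N_G(A)=U_{\Ind(G)}(A)$ is a facet and $N_G(A)$ is a minimal vertex cover. This collapses ``vertex cover'' and ``minimal vertex cover'' into one condition, closing the chain of equivalences, identifying free faces with independent $\ve$-dominating sets, and hence proving $\beta_{\ve}(G)=\beta(\Ind(G))$.
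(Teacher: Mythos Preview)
Your proposal is correct and is exactly the argument the paper leaves implicit: the corollary is stated without proof, as an immediate consequence of the identification set up in Theorem~\ref{thm:v-ive} together with Corollaries~\ref{cor:v-free} and~\ref{cor:v-beta}, and you have carefully spelled out that identification (free faces of $\Ind(G)$ coincide with independent $\ve$-dominating sets of $G$). In particular, your observation that for an independent set $A$ the condition ``$N_G(A)$ is a vertex cover'' automatically upgrades to ``$N_G(A)$ is a \emph{minimal} vertex cover'' is precisely the point the paper glosses over in the proof of Theorem~\ref{thm:v-ive} with the phrase ``the fact that $N_G(S)$ is a minimal vertex cover follows, since $S$ is an independent $\ve$-dominating set''; your argument via the edge $wa$ for $w\in N_G(A)$ and $a\in A\subseteq V\setminus N_G(A)$ makes this explicit and is sound.
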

\section{Proof of Theorem~\ref{thm:main}}
We consider a $17$-vertex flag triangulation of the dunce hat~\citep{BFJ} illustrated as in Figure~\ref{fig:fig1}, where vertices with the same label are identified. Denote by $D$, the graph whose independence complex is isomorphic to given triangulation.

\usetikzlibrary{positioning,backgrounds}
\begin{figure}[htb]
\centering
\begin{tikzpicture}[scale=1.24]
\tikzstyle{circ} = [circle, minimum width=1.5mm, inner sep=0pt,draw,fill]

\node[circ] (a) at (0, 0) [label=below left:{ $1$}] {};
\node[circ] (b) at (3, 0)  [label=below:{ $2$}] {};
\node[circ] (c) at (6, 0)  [label=below:{ $4$}] {};
\node[circ] (d) at (9, 0)  [label=below:{ $3$}] {};
\node[circ] (e) at (12, 0) [label=below right:{ $1$}] {}
	edge[thick] (a);
\node[circ] (f) at (10.33, 2.5) [label=above right:{ $3$}] {};	
\node[circ] (g) at (9.03, 4.5) [label=above right:{ $4$}] {}
            edge[thick](a);
\node[circ] (t) at (6, 9) [label=above:{ $1$}] {}
            edge[thick] (e)
            edge[thick] (c);
\node[circ] (h) at (1.63, 2.5) [label=above left:{ $2$}] {};
\node[circ] (i) at (3.03,4.5) [label=above left:{ $4$}] {}
             edge[thick] (e);	
\node[circ] (j) at (4.3, 6.5) [label=above left:{ $3$}] {};
\node[circ] (k) at (7.7, 6.5) [label=above right:{ $2$}] {};
\node[circ] (l) at (4.3, 6.5) [label=above left:{ $3$}] {};
\node[circ] (cc) at (6, 3) [label=above right:{ $17$}] {};

\node[circ] (m) at (5, 6) [label=above right:{ $7$}] {}
             edge[thick] (t)
             edge[thick] (j)
             edge[thick] (i)
             edge[thick] (cc);
\node[circ] (n) at (6, 6) [label=above left:{ $6$}] {};
\node[circ] (r) at (7, 6) [label=above left:{ $5$}] {}
             edge[thick] (m)
             edge[thick] (k)
             edge[thick] (g)
             edge[thick] (t)
             edge[thick] (cc);
             
\node[circ] (p) at (3.5, 1) [label=above left:{ $11$}] {}
             edge[thick] (b)
             edge[thick] (c)
             edge[thick] (a)
             edge[thick] (cc);                          
\node[circ] (q) at (6, 1) [label=above left:{ $12$}] {};
\node[circ] (u) at (8.5, 1) [label=above right:{ $13$}] {}
              edge[thick] (p)
              edge[thick] (e)
              edge[thick] (d)
              edge[thick] (c)
              edge[thick] (cc);

\node[circ] (v) at (2, 1) [label= left:{ $10$}] {}
             edge[thick] (p);
\node[circ] (w) at (10, 1) [label=right:{ $14$}] {}
             edge[thick] (u);

\node[circ] (s) at (2.5, 2) [label=below right:{ $9$}] {}
             edge[thick] (v)
             edge[thick] (a)
             edge[thick] (h)
             edge[thick] (i)
             edge[thick] (cc);
\node[circ] (o) at (9.5, 2) [label=below left:{ $15$}] {}
             edge[thick] (w)
             edge[thick] (g)
             edge[thick] (f)
             edge[thick] (e)
             edge[thick] (cc);             

\node[circ] (y) at (4, 4) [label=below:{ $8$}] {}
            edge[thick] (m)
             edge[thick] (s);
\node[circ] (z) at (8, 4) [label=below :{ $16$}] {}
             edge[thick] (o)
             edge[thick] (r);

\begin{pgfonlayer}{background}
\fill[draw, line width=0.3mm,top color=gray!30,bottom color=gray!27] (a.center)
       -- (e.center) -- (t.center)  -- (a.center);
\end{pgfonlayer}
\end{tikzpicture}
\caption{A flag triangulation $\Ind(D)$ of the dunce hat.}
\label{fig:fig1}
\end{figure}
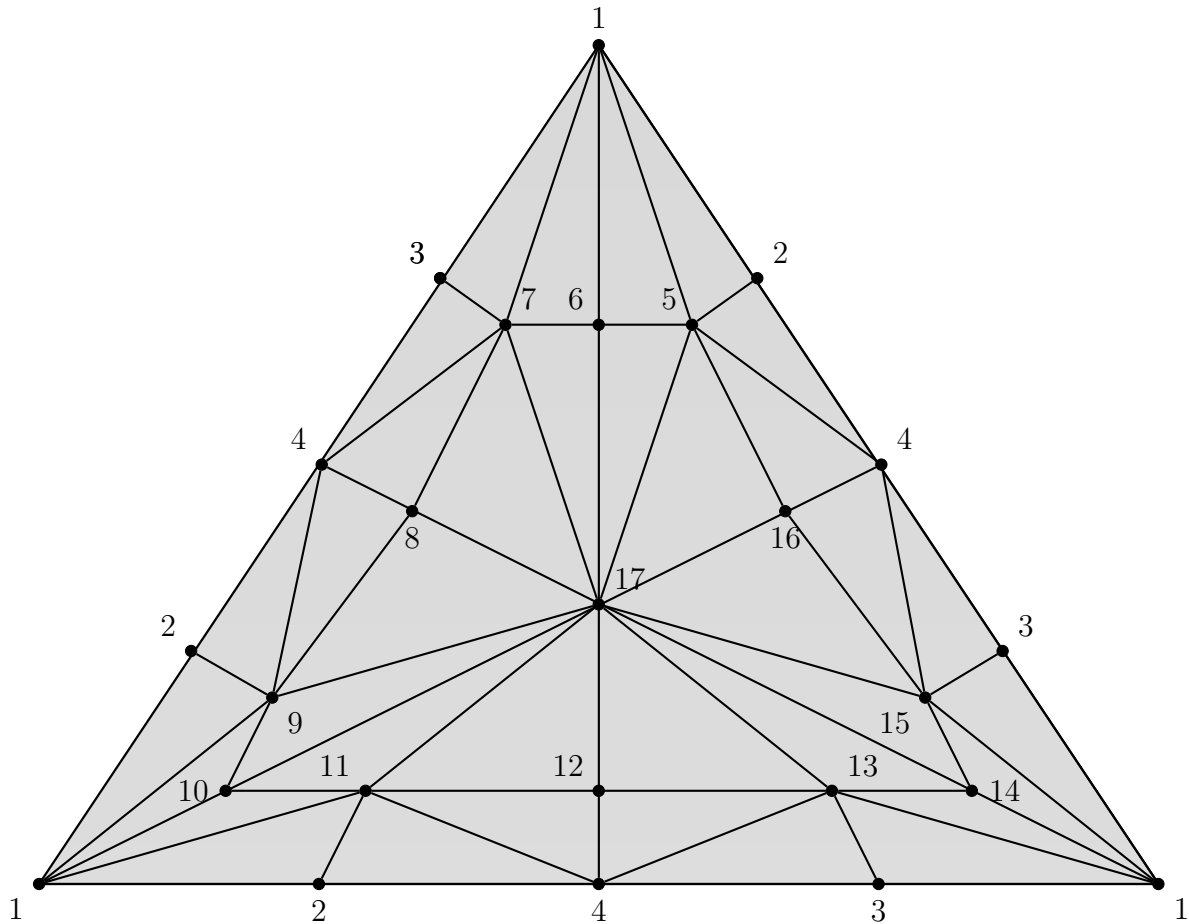

Observe first that $\reg(D)=2$ and $v(D)=3$. The latter follows from the fact that neither any vertex nor any edge is a free face of $\Ind(D)$. For the former, we note that $\overline{C}_{12}$ on the vertex subset $\{5,6,7,8,9,10,11,12,13,14,15,16\}$ is an induced subgraph of $D$ so that
$2=\reg(\overline{C}_{12})\leq \reg(D)$. On the other hand, we have $\reg(D)\leq \cd(D)=2$. Indeed, we set $V_1:=\{5,7,9,11,13,15\}$ and $V_2=\{6,8,10,12,14,16\}$, and define two graphs $R_i$ on $V_1\cup V_2$ by
$E(R_i):=\{pq\in E(D)\colon p\in V_i\;\textnormal{and}\;q\in V_1\cup V_2\}$ for $i\in \{1,2\}$. We then let $Q_1$ and $Q_2$ be the subgraphs of $D$ on $V(D)$ with 
\begin{align*}
E(Q_1):&=E(R_1)\cup \{uv\in E(D)\colon u\in \{2,3\}\},\\
E(Q_2):&=E(R_2)\cup \{xy\in E(D)\colon x\in \{1,4\}\}.
\end{align*}
It is rather easy to check that both subgraphs $Q_1$ and $Q_2$ are co-chordal and satisfy that $E(D)=E(Q_1)\cup E(Q_2)$. 

Now, we construct a graph $G_n$ for each $n\geq 2$ as follows.
The vertex set of $G_n$ is given by $V(G_n)=A_n\cup \bigcup_{i=1}^n B_i$,
where $A_n=\{a_1,\ldots,a_n\}$ with $G_n[A_n]\cong K_n$, and
$B_i=\{y^i_1,\ldots,y^i_{17}\}$ such that $D_i:=G_n[B_i]\cong D$ via the mapping $y^i_j\mapsto j$ for $1\leq i\leq n$ and $1\leq j\leq 17$. Furthermore, the edge set of $G_n$ is given by
\begin{equation*}
E(G_n):=E(K_n)\cup \bigcup_{i=1}^n E(D_i)\cup \{a_iy^i_1\colon 1\leq i\leq n\}.
\end{equation*}
  
We remark that as the graph $D_i$ is connected for each $1\leq i\leq n$, so is the graph $G_n$.  
   
\begin{proof}[{\bf Proof of Theorem~\ref{thm:main}}]
We initially verify that $v(G_n)=3n$ and $\reg(G_n)=2n+1$ for every $n\geq 2$.\medskip

{\em Claim} $1$: $v(G_n)=3n$ for every $n\geq 2$.\medskip

{\em Proof of the Claim} $1$: Firstly, the set $S_n:=\bigcup_{i=1}^n \{y^i_1,y^i_2,y^i_5\}$ is an independent $\ve$-dominating set in $G_n$. Since the set $\{y^i_1,y^i_2,y^i_5\}$ forms a triangle in $\Ind(D_i)$ which is a free face of it, the set $S_n$ is a free face of $\Ind(G_n)$ from which we conclude that $v(G_n)\leq 3n$.

Suppose next that $v(G_n)=i_{\ve}(G_n)=|S|$ for some subset $S\subseteq V(G_n)$. Since $S$ is an independent set and $A_n$ induces a complete subgraph, we have that $|S\cap A_n|\leq 1$. If $S\cap A_n=\emptyset$, it then follows that $|S|\geq 3n$ as $v(D_i)=i_{\ve}(D_i)=3$ for each $i\in [n]$. We may therefore assume that $S\cap A_n=\{a_1\}$ without loss of generality. Since the vertex $a_1$ can not $\ve$-dominate any edge in the induced subgraph $D_i$,
we conclude that $|S\cap B_i|=3$ for each $2\leq i\leq n$. On the other hand, if we consider the graph $L_1:=G_n[B_1\cup \{a_1\}]$, we conclude that $v(L_1)=i_{\ve}(L_1)=3$. This readily follows from the fact that neither any vertex nor any edge in $\Ind(L_1)$ is a free face of it. However, this shows that $|S\cap V(L_1)|=3$; hence, $|S|=3n$.\medskip
 
{\em Claim} $2$: $\reg(G_n)=2n+1$ for every $n\geq 2$.\medskip

{\em Proof of the Claim} $2$: We first show that $\reg(G_n)\leq 2n+1$ by applying to Lemma~\ref{lem:induction-sc} together with an induction on $n$.  

For $n=2$, if we set $L_2:=G_2[B_2\cup \{a_2\}]$, we have that
$\reg(G_2-a_1)=\reg(D_1)+\reg(L_2)$. However, since $\deg_{L_2}(a_2)=1$,
it follows from~\citep[Lemma $6.2$]{BC-jca} that we have either $\reg(L_2)=\reg(L_2-a_2)=\reg(B_2)=2$ or else $\reg(L_2)=\reg(L_2-N_{L_2}[y_1^2])+1$. As a result, we conclude that $\reg(L_2)\leq 3$, which in turn implies the upper bound $\reg(G_2-a_1)\leq 2+3=5$. On the other hand,
there is the isomorphism $G_2-N_{G_2}[a_1]\cong (D_1-y^1_1)\cup D_2$ so that
$\reg(G_2-N_{G_2}[a_1])\leq 2+2=4$. Altogether, these imply that
$\reg(G_2)\leq 5$, which completes the case $n=2$.

For every $n\geq 3$, notice the following isomorphisms
\begin{equation}\label{eq:eq1}
\begin{aligned}
&G_n-a_1\cong G_{n-1}\cup D_1, \\
&G_n-N_{G_n}[a_1]\cong (D_1-y^1_1) \cup D_2\cup \ldots\cup D_n. 
\end{aligned}
\end{equation}

Now, it follows from (\ref{eq:eq1}) together with the induction that
\begin{align*}
&\reg(G_n-a_1)=\reg(G_{n-1})+2\leq 2(n-1)+1+2=2n+1,\\
&\reg(G_n-N_{G_n}[a_1])=2n.
\end{align*}
Thus, we conclude that $\reg(G_n)\leq 2n+1$ for each $n\geq 2$ by Lemma~\ref{lem:induction-sc}. On the other hand, the set
$$M_n:=\{a_1a_2\}\cup \{y^i_4y^i_{17},y^i_7y^i_9\colon 1\leq i\leq n\}$$
forms an induced matching in $G_n$ of size $2n+1$. Therefore, it follows that
$2n+1\leq \im(G_n)\leq \reg(G_n)\leq 2n+1$; hence, $\reg(G_n)=2n+1$ as claimed.

Finally, in order to complete the proof, we set $H_k:=G_{k+1}$ for each $k\geq 1$. It then follows that $v(H_k)=3k+3$ and $\reg(H_k)=2k+3$; thus, $v(H_k)=\reg(H_k)+k$.
\end{proof}
\section{Further comments}
We recall that a simplicial complex $X$ is $k$-collapsible if it can be reduced to the void complex by repeatedly removing a free face of size at most $k$. The collapsibility number $\col(X)$ of $X$ is the smallest integer $k$ such that it is $k$-collapsible. The family of $k$-collapsible simplicial complexes were introduced by Wegner~\cite{Weg}, where he also proved that $\reg(X)\leq \col(X)$ holds for every simplicial complex $X$.
\begin{figure}[ht]
\begin{center}
\begin{tikzpicture}[scale=0.5]

\node [noddee] at (0,0) (v1) [] {};
\node [noddee] at (4,0) (v2) [] {}
	edge [] (v1);
\node [noddee] at (2,2) (v3) [] {}
     edge [] (v2);

\node [noddee] at (0,4) (v4) [] {}     
        edge [] (v1)
        edge [] (v3);
\node [noddee] at (2,6) (v5) [label=above :{ $z$}] {}        
      edge [] (v3)
      edge [] (v4);
\node [noddee] at (4,4) (v6) [] {}         
      edge [] (v2)
      edge [] (v5);

\end{tikzpicture}
\end{center}
\caption{A vertex decomposable graph $R$ with $v(R)=1<2=\col(\Ind(R))$.}
\label{fig:fig2}
\end{figure}
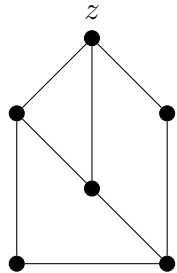

As a result of Corollary~\ref{cor:v-free}, the inequality $v(X)\leq \col(X)$ holds for every simplicial complex $X$. However, it could be strict even for vertex decomposable simplicial complexes. Recall that a simplicial complex $X$ is said to be \emph{vertex decomposable} if it is either a simplex or else there exists a vertex $z$ such that $\del(X;z)$ and $\lk(X;z)$ are vertex decomposable, and every facet of $\del(X;z)$ is a facet of $X$. In the latter, the vertex $z$ is called a \emph{shedding vertex} of $X$. Now, for the graph $R$ depicted in Figure~\ref{fig:fig2}, its independence complex is vertex decomposable, while $v(R)=1<2=\col(\Ind(R))$. 

Finally, we point out that the graph $H_k$ constructed in the proof of Theorem~\ref{thm:main} also provides the first example of a connected graph satisfying that $\col(\Ind(H_k))\geq \reg(H_k)+k$ for every $k\geq 1$ (compare to~\citep[Theorem $1.1 (b)$]{MT}).


\section*{Acknowledgments}
I would like to thank Rafael Villarreal for his invaluable comments and suggestions during preparation of this manuscript.


\begin{thebibliography}{00}

\bibitem{BC-jca} T.~B{\i}y{\i}ko{\u{g}}lu and Y.~Civan,
\newblock{\em Prime graphs, matchings and the Castelnuovo-Mumford regularity,}
\newblock{J. Commutative Algebra, 11:1, (2019), 1-27.}

\bibitem{BC-prime} T.~B{\i}y{\i}ko{\u{g}}lu and Y.~Civan,
\newblock{\em Castelnuovo-Mumford regularity of graphs,}
\newblock{Combinatorica, 38:6, (2018), 1353-1383.}

\bibitem{BFJ} R.~Boulet, E.~Fieux and B.~Jouve,
\newblock{\em Simplicial simple homotopy of flag complexes in terms of graphs,}
\newblock{European J. Combinatorics, 31 (2010), 161-176.}

\bibitem{CSTVV} S.M.~Cooper, A.~Seceleanu, S.O.~Toh{\v{a}}neanu, M.~Vaz Pinto and R.H.~Villarreal,
\newblock{\em Generalized minimum distance functions and
algebraic invariants of Geramita ideals,}
\newblock{Advances in Applied Mathematics, 112, (2020), 101940.}

\bibitem{DHS} H.~Dao, C.~Huneke and J.~Schweig,
\newblock{\em Bounds on the regularity and projective dimension of ideals associated to graphs,}
\newblock{J. Algebraic Combinatorics, 38:1, (2013), 37-55.}

\bibitem{HW} H.T.~H{$\grave{\textrm{a}}$} and R.~Woodroofe,
\newblock{\em Results on the regularity of squarefree monomial ideals,}
\newblock{Advances in Applied Mathematics, 58, (2014), 21-36.}

\bibitem{JV} D.~Jaramillo and R.H.~Villarreal,
\newblock{\em The v-number of edge ideals,}
\newblock{J. Combinatorial Theory, Series A, 177, (2021), 35pp.}

\bibitem{MK} M.~Katzman,
\newblock{\em Characteristic-independence of Betti numbers of graph ideals,}
\newblock{J. Combinatorial Theory, Series A, 113 (2006), 435-454.}

\bibitem{JL} J.~Lewis,
\newblock{\em Vertex-edge and Edge-vertex Parameters in Graphs,}
\newblock{Phd Thesis, Clemson University, (2007).}

\bibitem{MT} J.~Matou{\v{s}}ek and M.~Tancer,
\newblock{\em Dimension Gaps between Representability
and Collapsibility,}
\newblock{Discrete and Computational Geometry, 42, (2009), 631-639.}

\bibitem{SS} K.~Saha and I.~Sengupta,
\newblock{\em The $v$-number of monomial ideals,}
\newblock{preprint, arXiv:2111.12881v2, (2022), 26pp.}

\bibitem{Weg} G.~Wegner,
\newblock{\em $d$-collapsing and nerves of families of convex sets,}
\newblock{Archiv der Mathematik, 26, (1975), 317-321.}

\bibitem{RW2} R.~Woodroofe,
\newblock{\em Matchings, coverings, and Castelnuovo-Mumford regularity,}
\newblock{J. Commutative Algebra, 6, (2014), 287-304.}


\end{thebibliography}
\end{document}